\def\BibTeX{{\rm B\kern-.05em{\sc i\kern-.025em b}\kern-.08em
    T\kern-.1667em\lower.7ex\hbox{E}\kern-.125emX}}
\newtheorem{lemma}{Lemma}
\newtheorem{theorem}{Theorem}
\newtheorem{corollary}{Corollary}
\newtheorem{remark}{Remark}[section]
\newtheorem{assumption}{Assumption}
\newtheorem{problem}{Problem}
\def\diag{\mathrm{diag}}
\def\dref#1{(\ref{#1})}
\begin{document}
\title{Solving specified-time distributed optimization problem via sampled-data-based algorithm}
\author{Jialing Zhou, Yuezu Lv, Changyun Wen, and Guanghui Wen
	\thanks{Jialing Zhou is with the School of Automation, Nanjing University of Science and Technology, Nanjing 210094, China (e-mail: jialingz@njust.edu.cn).}
	\thanks{Yuezu Lv and Guanghui Wen are with the School of Mathematics, Southeast University, Nanjing 211189, China (e-mail: yzlv@seu.edu.cn, wenguanghui@gmail.com).}
	\thanks{Changyun Wen is with the School of Electrical and Electronic Engineering, Nanyang Technological University, Nanyang Avenue 639798, Singapore (e-mail: ecywen@ntu.edu.sg).}
}

\maketitle

\begin{abstract}
Despite significant advances on distributed continuous-time optimization of multi-agent networks, there is still lack of an efficient algorithm to achieve the goal of distributed optimization at a pre-specified time.
Herein, we design a specified-time distributed optimization algorithm for connected agents with directed topologies to collectively minimize the sum of individual objective functions subject to an equality constraint. With the designed algorithm, the settling time of distributed optimization can be exactly predefined. The specified selection of such a settling time is independent of not only the initial conditions of agents, but also the algorithm parameters and the communication topologies. Furthermore, the proposed algorithm can realize specified-time optimization by exchanging information among neighbours only at discrete sampling instants and thus reduces the communication burden. In addition, the equality constraint is always satisfied during the whole process, which makes the proposed algorithm applicable to online solving distributed optimization problems such as economic dispatch. For the special case of undirected communication topologies, a reduced-order algorithm is also designed. Finally, the effectiveness of the theoretical analysis is justified by numerical simulations.
\end{abstract}

\begin{IEEEkeywords}
Directed graph, distributed optimization, multi-agent network, specified-time convergence, sampled-data communication.
\end{IEEEkeywords}

\section{Introduction}

Over the past few years, we have witnessed tremendous progress on distributed optimization of multi-agent networks (MANs) \cite{survey2018,survey-yangtao}, owing partly to its potential applications in various engineering fields such as distributed sensor networks, smart grid, and multiple robots. The objective of distributed optimization over MANs is to minimize the group objective function by designing only local information-based algorithm for each agent \cite{survey2018}. 

In existing literature, various  algorithms have been proposed to solve distributed optimization problems in MANs \cite{subgradient2009,Primal-dual2016,push-sum-yangtao,gfh-cluster-consensus,gfh-cluster-dispatch,push-pull-pushi,zero-gradient-sum2012,zero-gradient-sum2016renwei,continous-pi}.
Among these, considerable quantity of algorithms are developed under iterative calculation scheme, such as the distributed subgradient descent algorithm \cite{subgradient2009}, the distributed primal-dual algorithm \cite{Primal-dual2016}, the push-sum based algorithms \cite{push-sum-yangtao}, the push-pull based algorithm \cite{push-pull-pushi}, the distributed hierarchical algorithm \cite{gfh-cluster-consensus,gfh-cluster-dispatch}, etc. Noticing that many practical systems operate in continuous-time, the other category of algorithms are developed under continuous-time updating scheme, such as the distributed continuous-time PI algorithm \cite{continous-pi}, the zero-gradient-sum algorithm \cite{zero-gradient-sum2012,zero-gradient-sum2016renwei}, just to name a few. 

All the above-mentioned discrete-time and continuous-time algorithms  converge to the optimal solutions asymptotically. However, in some practical applications such as energy resource allocation, fast and time-critical responses are required
\cite{survey-yangtao}. Therefore, developing distributed algorithms which are able to reach optimal solutions in finite time is highly desirable.
There are several available results about distributed finite-time optimization,
where the discrete-time algorithms are developed mainly based on the finite-time consensus technique \cite{discrete-time-finite-time1,discrete-time-finite-time2}, and the continuous-time algorithms are designed using the finite-time stability theory \cite{finite-time}.
It is worth noticing that the aforementioned discrete-time algorithms are only applicable to quadratic cost functions; and the upper bounds of the settling times of the aforementioned continuous-time algorithms depend on initial conditions of the MANs.
In \cite{fixed-time-ningboda,ChenLiAuto2018ConDis}, the authors step further to develop distributed fixed-time optimization algorithms which enable the upper bounds of settling times independent of initial conditions, which yields the advantage that the upper bounds of settling times can be preassigned even if the information of initial conditions is unavailable to the agents.

From the above discussions, it can be observed that convergence rate and settling time are significant performance indexes to be concerned about.
This motivates us to develop specified-time distributed optimization of MANs. Note that such an optimization problem is quite different from fixed-time optimization. The latter only requires pre-estimation of the upper bound of the settling time, which is conservative to certain degree, while the former requires precisely pre-determination of the settling time. In this paper, we aim to solve the specified-time distributed convex optimization subject to equality constraint under sampled-data communications with general directed topologies. A novel design strategy is established, under which the equality constraint is  satisfied during the whole process, and the settling time is independent of not only the initial conditions, but also the algorithm parameters and communication topologies. For the special case of undirected topologies, a reduced-order algorithm is also designed.
The main contributions of this paper are at least three folds.
Firstly,  different form existing results on finite-time and fixed-time optimization \cite{finite-time,fixed-time-ningboda,ChenLiAuto2018ConDis}, the settling time in this paper is no longer conservative but can be precisely preassigned according to practical demand.
Moreover, compared to existing results on fixed-time optimization \cite{ChenLiAuto2018ConDis,fixed-time-ningboda} where the bound of settling time depends on algorithm parameters and communication topologies, the settling time in this paper can be determined independently, which is more convenient for real-world applications.
Secondly, unlike related results in \cite{finite-time,fixed-time-ningboda,ChenLiAuto2018ConDis}, the proposed algorithm requires the information from neighbouring nodes only  at discrete sampling instants rather than for the whole time interval, which reduces communication burden and thus the communication cost of the MANs.
Thirdly, different from \cite{ChenLiAuto2018ConDis} only considering undirected topologies, the design structure established in this paper ensures that the equality constraint is satisfied during the whole process even if the communication topology is directed, which makes the proposed algorithm able to online solving distributed optimization problems such as economic dispatch for general directed multi-agent networks.

The remainder of the paper is  {summarized} as follows. In Section \ref{SectionTwo},  {preliminaries are  provided and the problem is formulated.}  {Section \ref{SectionThree} presents the specified-time distributed optimization algorithm design for directed MANs, and}
the special case of undirected topologies is investigated in Section \ref{SectionFour}.  {In Section \ref{SectionFive},
	numerical simulations are conducted to validate the theoretical analysis, and the results show the} advantages of the specified-time convergent algorithms.  {Finally, the paper is concluded in} Section \ref{SectionSix}. 

\textbf{Notation}.  {Symbol $\otimes$ denotes the Kronecker product.  Symbol $\|\cdot\|$  represents the Euclidian norm.}
Symbols $\mathbb{R}$, $\mathbb{N}$ and $\mathbb{N}^+$  respectively denote the sets of real numbers, non-negative integers and positive integers. $\bm{1}$ denotes a column vector with
all entries being $1$. $\nabla f(\cdot)$ and
$\nabla^2f (\cdot)$  represent the gradient and the Hessian matrix of
the function $f (\cdot)$, respectively. For a real and symmetric matrix $A$, write $A\succeq 0$ $(A\preceq 0)$ if $A$ is
positive (negative) semi-definite.

\section{Preliminaries and problem formulation}\label{SectionTwo}
\subsection{Preliminaries}
The underlying interaction graph of a MAN  containing $n$ agents can be represented by a directed graph $\mathcal{G}(\mathcal{V},\mathcal{E})$ with the node (agent) set $\mathcal{V}{=}\{1,\cdots,n\}$ and the edge (communication link) set $\mathcal{E}{\subseteq} \mathcal{V}{\times} \mathcal{V}$. An edge $(i,j){\in}\mathcal{E}$ represents that node $j$ can receive information from node $i$. A path from node $i_1$ to node $i_l$ is a sequence of ordered edges of the form $(i_k, i_{k+1}), k{=} 1,{\cdots},l{-}1$. Graph $\mathcal{G}$ is called strongly connected if  for any node, there exist paths to all
other nodes.
The adjacency matrix $\mathcal{A} {=}[a_{ij} ]{\in} \mathbb{R}^{n{\times}n}$ of graph $\mathcal{G}$ is defined as $a_{ii}{=} 0$,
$a_{ij}{=} 1$ if $(j,i){\in}\mathcal{E}$  and $0$ otherwise.
The Laplacian matrix $\mathcal{L}{=}[l_{ij} ]{\in} \mathbb{R}^{n{\times}n}$ of graph $\mathcal{G}$ is defined as $l_{ii}{=} \sum_{j=1}^n a_{ij}$ and
$l_{ij}{=}{-} a_{ij},i\neq j$.
Define  the in-degree matrix  $\mathcal{D}{=}\diag\{d^{\text{in}}_1,{\cdots},d^{\text{in}}_n\}$ with $d_{i}^{\text{in}}{=}\sum_{j=1}^n a_{ij}$ denoting the in-degree of agent $i$. Then, the Laplacian matrix $\mathcal{L}$ can be rewritten as $\mathcal{L}{=}\mathcal{D}{-}\mathcal{A}$. Obviously, $\mathcal{L}\bm{1}{=}0$. Moreover, for a strongly connected graph, zero is a simple eigenvalue of the Laplacian matrix and all the other
eigenvalues lie in the right plane \cite{renwei2007Information-consensus}.

Similarly, define  $\mathcal{L_O}{=} \mathcal{D_O}{-}\mathcal{A}$, where  $\mathcal{D_O}{=}\diag\{d^{\text{out}}_1,{\cdots},d^{\text{out}}_n\}$ with $d_{i}^{\text{out}}{=}\sum_{j=1}^n a_{ji}$ denoting the out-degree of agent $i$. Obviously, $\bm{1}^T\mathcal{L_O}{=}0$.  Note that $\mathcal{L_O}^T$ can be viewed as the Laplacian matrix of graph $\mathcal{G}'{=}(\mathcal{V},\mathcal{E}')$, where $\mathcal{E}'{=}\{(j,i)|(i,j){\in}\mathcal{E}\}$. Thus, graph $\mathcal{G}'$ is strongly connected if and only if graph $\mathcal{G}$ is strongly connected.
 Clearly, if $\mathcal{G}$ is undirected,  $\mathcal{L_O}{=}\mathcal{L}$.

The following lemma is commonly known in matrix theory.
\begin{lemma}\label{lemma.matrix}
For a symmetric matrix $A{\in}\mathbb{R}^{n\times n}$, denote the eigenvalues of $A$ by $\lambda_1,{\cdots},\lambda_n$ satisfying $\lambda_1{\leq}{\cdots}{\leq}\lambda_n$. Then,
$
y^T A y{\geq} \lambda_i y^Ty, ~\forall y{\in}\{y|y{\perp} v_j, j{=}1,{\cdots}, i{-}1\},~ i{=}1,{\cdots},n,
$
where $v_j$ is the eigenvector of $A$ associated with the eigenvalue $\lambda_j$.
\end{lemma}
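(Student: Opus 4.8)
The plan is to diagonalize $A$ via the spectral theorem and expand the test vector $y$ in an orthonormal eigenbasis. First I would use the symmetry of $A$ to fix an orthonormal family of eigenvectors $v_1,\ldots,v_n$ with $Av_k=\lambda_k v_k$ and $v_j^T v_k=\delta_{jk}$ (within each eigenspace of a repeated eigenvalue one simply selects an orthonormal basis); here ``the eigenvector $v_j$ associated with $\lambda_j$'' is understood as this fixed orthonormal eigenbasis. Since $\{v_1,\ldots,v_n\}$ is a basis of $\mathbb{R}^n$, every $y$ admits the expansion $y=\sum_{k=1}^n c_k v_k$ with $c_k=v_k^T y$.

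Next I would bring in the orthogonality hypothesis: for $y\perp v_j$, $j=1,\ldots,i-1$, we get $c_j=v_j^T y=0$ for those indices, hence $y=\sum_{k=i}^n c_k v_k$. Substituting this into $y^T A y$, using $Av_k=\lambda_k v_k$ and the orthonormality relations, yields $y^T A y=\sum_{k=i}^n \lambda_k c_k^2$ and, likewise, $y^T y=\sum_{k=i}^n c_k^2$.

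Finally, I would invoke the ordering $\lambda_i\leq\lambda_{i+1}\leq\cdots\leq\lambda_n$, so that $\lambda_k c_k^2\geq\lambda_i c_k^2$ for every $k\geq i$; summing over $k=i,\ldots,n$ gives $y^T A y\geq\lambda_i\sum_{k=i}^n c_k^2=\lambda_i y^T y$, which is the desired inequality. The argument is essentially bookkeeping with the eigen-expansion; the only point that needs a moment of care is the handling of repeated eigenvalues, i.e.\ making the convention on $v_j$ precise, and once that is settled there is no genuine obstacle.
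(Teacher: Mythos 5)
Your proof is correct and complete: the spectral-theorem expansion of $y$ in an orthonormal eigenbasis, the vanishing of the first $i-1$ coefficients from the orthogonality hypothesis, and the eigenvalue ordering give exactly the claimed bound, and your remark about fixing an orthonormal basis within repeated eigenspaces is the right way to make the statement of the lemma precise. The paper itself offers no proof (it cites the lemma as commonly known in matrix theory), and your argument is precisely the standard one it implicitly relies on, so there is nothing to reconcile.
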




\subsection{Problem formulation}
The MAN is assumed to contain $n$ agents where the state of agent $i$ $(i{=}1,{\cdots},n)$ at time $t$ is denoted as $x_{i}(t){\in} \mathbb{R}$. Define $\bm{x}(t){=}[x_1(t),\cdots, x_n(t)]^T$. Herein, the optimization objective is to minimize the sum of $n$ local objective functions $f_{i}(x_{i})$ at a pre-specified time $T_{c}$ $(T_{c}{>}0)$ subject to a global equality constraint $\sum_{i=1}^{n}x_{i}(t){=}\sum_{i=1}^{n}x_{i}(0){=}C$ with $x_i(0)$ being the initial value of agent $i$ and $C$ being a given real scalar.

Denote $\bm{x}^{*}{\in}\mathbb{R}^n$ the minimum of the following time-invariant
optimization problem
\begin{equation*}
\begin{aligned}
\min_{\bm{x}} f(\bm{x})&=\min_{\bm{x}} \sum_{i=1}^{n}f_{i}(x_{i}), \\
\mathrm{subject}~~\mathrm{to}~& \sum_{i=1}^{n}x_{i}=\sum_{i=1}^{n}x_{i}(0)=C.
\end{aligned}
\end{equation*}
The following assumption ensures the  existence and uniqueness of  $\bm{x}^*$.
\begin{assumption}\label{AssumptionLip}
	The objective function $f_{i}(\cdot)$ $(i{=}1,{\cdots},n)$
	is strongly convex such that $\nabla^{2}f_{i}(\cdot){\geq} l_0$ with $l_0$ a given positive real number.
	Moreover, the gradient of function $f(\cdot)$ is
	Lipschitz continuous, i.e., there exists a positive constant $l$, such that $\|\nabla f(\bm{a}){-}\nabla f(\bm{b})\|{\leq}  l \|\bm{a}{-}\bm{b}\|$ for any $\bm{a},\bm{b}{\in}\mathbb{R}^n$.
\end{assumption}

Moreover, applying the method of Lagrange multiplier, one can obtain that $\bm{x}^*$ satisfies $\nabla f(\bm{x}^*){=}{-}\lambda^*\bm{1}$ with $\lambda^*{\in}\mathbb{R}$ denoting the optimal Lagrange multiplier, which further implies that
\begin{equation}
\mathcal{L}\nabla f(\bm{x}^*)=0,\mathcal{L_O}^T\nabla f(\bm{x}^*){=}0.
\end{equation}

Then, the problem investigated in this paper can be stated as follows.
\begin{problem}\label{problem}
	For the MAN with underlying interaction graph $\mathcal{G}$, design a distributed algorithm such that the MAN state $\bm{x}(t)$ converges to $\bm{x}^*$ at the pre-specified time $T_c$ while satisfies
	\begin{equation}\label{eq.problem.constaint}
		\sum_{i=1}^{n}x_{i}(t)=\sum_{i=1}^{n}x_{i}(0)=C, \forall t\in [0,T_c].
	\end{equation}
\end{problem}

\section{Specified-time distributed optimization for directed MANs}\label{SectionThree}

In this section, we consider the distributed algorithm design under general directed graph.
To solve the specified-time distributed optimization problem with the global equality constraint, we design the following continuous-time algorithm for agent $i{=}1,{\cdots},n$ based on sampled-data communication:
\begin{equation}\label{eq.law.x.i}
\begin{aligned}
x_{i}(t)=&x_{i}(0)-d_i^{\text{out}}\xi_{i}(t_k)+\sum_{j=1}^n a_{ij}\xi_{j}(t_k),\\
\dot{\xi}_{i}(t)=&\frac{\beta}{t_{k+1}-t_{k}}\big(d_i^{\text{out}}\psi_{ii}(t_k)-\sum_{j=1}^Na_{ji}\psi_{ij}(t_k)\big),\\
\dot\psi_{im}(t)=&-\frac{1}{(t_{k+1}-t_k)(d_i^{\text{in}}+a_{im})}\bigg(\sum_{j=1}^{n}a_{ij}(\psi_{im}(t_k)\\
&-\psi_{jm}(t_k))+a_{im}\big(\psi_{im}(t_k)-\frac{df_m}{dx_m}(x_m(t_k))\big)\bigg),\\
&\forall t{\in}[t_k,t_{k+1}), ~k{\in}\mathbb{N},~m=1,{\cdots},n,
\end{aligned}
\end{equation}
where $\xi_i$ is an auxiliary variable of agent $i$, $\psi_{im}$ denotes the estimation of $\frac{df_m(x_m)}{dx_m}$ by agent $i$, $\beta$ is a small positive constant to be determined later, $\xi_i(0)=0$,  $\psi_{im}(0)=0$, and $t_{k}$ denotes the $k$-th sampled instant. To achieve convergence at the specified time $T_c$, the following sampled instants are designed:
\begin{equation} \label{SamplingTimeSec2}
t_{k}=\left\{ \begin{aligned}
&\sum_{l=1}^{k}T_{l}, \quad k{\in}\mathbb{N}^+, \\
& 0, \quad\;\;\;\;\;\; \;\;  k=0.
\end{aligned} \right.
\end{equation}
where
\begin{equation}\label{SamplingTimeIntervalSec2}
T_{k}=\frac{6}{(\pi k)^2}T_{c}, \quad k{\in}\mathbb{N}^+.
\end{equation}
Define  $\bm{\psi}_i(t){=}[\psi_{i1}(t),{\cdots},\psi_{in}(t)]^T{\in}\mathbb{R}^n$. Then,  $\bm{\psi}_i(t)$ can be viewed as the estimation of gradient $\nabla f(\bm{x}(t))$ by agent $i$.
Under the proposed algrithm (\ref{eq.law.x.i}), agent $i$ requires the information of $\xi_j$, $\frac{df_j(x_j)}{dx_j}$, and  $\bm{\psi}_{j}$ from its in-neighbor  $j$
at sampling time instants $t_k,t{\in}\mathbb{N}$.
Note that the second equation of (\ref{eq.law.x.i}) does not require information exchange among neighboring agents, since  $\psi_{i1},\cdots,\psi_{in}$ are local information of agent $i$.

\begin{lemma}\label{lemma.equality.constraint}
	Under the proposed algorithm (\ref{eq.law.x.i}), the equality constraint (\ref{eq.problem.constaint}) in Problem \ref{problem} is always satisfied.
\end{lemma}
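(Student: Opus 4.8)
The plan is to work directly from the explicit (algebraic, not differential) form of the $x_i$-update in \eqref{eq.law.x.i}: on each sampling interval $x_i$ is simply a constant, and once we sum over $i$ the two correction terms cancel by the very definition of the out-degrees. Fix $k\in\mathbb{N}$; for $t\in[t_k,t_{k+1})$ we have $x_i(t)=x_i(0)-d_i^{\text{out}}\xi_i(t_k)+\sum_{j=1}^n a_{ij}\xi_j(t_k)$, hence
\begin{equation*}
\sum_{i=1}^n x_i(t)=\sum_{i=1}^n x_i(0)-\sum_{i=1}^n d_i^{\text{out}}\xi_i(t_k)+\sum_{i=1}^n\sum_{j=1}^n a_{ij}\xi_j(t_k).
\end{equation*}

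Next I would exchange the order of summation in the double sum and invoke the definition $d_j^{\text{out}}=\sum_{i=1}^n a_{ij}$ (equivalently, the column-sum-zero property $\bm{1}^T\mathcal{L_O}=0$ recalled in the preliminaries): $\sum_{i=1}^n\sum_{j=1}^n a_{ij}\xi_j(t_k)=\sum_{j=1}^n\big(\sum_{i=1}^n a_{ij}\big)\xi_j(t_k)=\sum_{j=1}^n d_j^{\text{out}}\xi_j(t_k)$, which is precisely the term subtracted above. Therefore $\sum_{i=1}^n x_i(t)=\sum_{i=1}^n x_i(0)=C$ for every $t\in[t_k,t_{k+1})$. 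At $t=0$ this is also consistent with the initialization $\xi_i(0)=0$, which already forces $\bm{x}(t)=\bm{x}(0)$ on $[0,t_1)$.

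To cover the whole interval, note that $T_k=\tfrac{6}{(\pi k)^2}T_c$ with $\sum_{k=1}^{\infty}\tfrac{6}{(\pi k)^2}=1$, so the instants in \eqref{SamplingTimeSec2}--\eqref{SamplingTimeIntervalSec2} satisfy $t_k\uparrow T_c$ and $[0,T_c)=\bigcup_{k\in\mathbb{N}}[t_k,t_{k+1})$; the identity above then holds for all $t\in[0,T_c)$, and the endpoint $t=T_c$ is handled by continuity, taking $\bm{x}(T_c)=\lim_{t\to T_c^-}\bm{x}(t)$, which exists because $\bm{\xi}$ is the solution of an ODE and hence continuous. This establishes \eqref{eq.problem.constaint}.

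I do not anticipate a genuine obstacle here: the equality constraint is hard-wired into the $x_i$-update via the column-sum-zero structure of $\mathcal{L_O}$, and the auxiliary dynamics for $\xi_i$ and $\psi_{im}$ play no role at all. The only steps deserving a line of care are the index bookkeeping that identifies $\sum_{i=1}^n a_{ij}$ with $d_j^{\text{out}}$, and the treatment of the accumulation instant $T_c$.
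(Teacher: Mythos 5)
Your proposal is correct and is essentially the paper's own argument: the paper writes the update compactly as $\bm{x}(t)=\bm{x}(0)-\mathcal{L_O}\bm{\xi}(t_k)$ and invokes $\bm{1}^T\mathcal{L_O}=0$, which is exactly the column-sum cancellation you carry out componentwise. Your extra care about the accumulation point $t=T_c$ is a reasonable refinement the paper omits, but it does not change the substance of the proof.
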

\begin{proof}
	Define $\bm{\xi}(t){=}[\xi_1(t),\xi_2(t),{\cdots}, \xi_n(t)]^T{\in} \mathbb{R}^{n}$. Then, the compact form of the agent states can be written as
	\begin{equation}
	\bm{x}(t)=\bm{x}(0)-\mathcal{L_O}\bm{\xi}(t_k),\quad t{\in}[t_k,t_{k+1}),\;k{\in}\mathbb{N}.
	\end{equation}
	Noting that $\bm{1}^T\mathcal{L_O}{=}0$, one has
	$\bm{1}^T\bm{x}(t){=}\bm{1}^T\bm{x}(0){=}C,$ $\forall t{\geq}0$.
\end{proof}

\begin{remark}\label{rem1}
In (\ref{eq.law.x.i}), to meet the global equality constraint during the whole process, the auxiliary variable $\bm{\xi}(t)$ is introduced and the design structure $\bm{x}(t){=}\bm{x}(0){-}\mathcal{L_O}\bm{\xi}(t_k)$ is established. Under this framework,
the problem of searching $\bm{x}^*$ can be reformulated as the problem of finding the minimum of the composite function $f\left(\bm{x}(0){-}\mathcal{L_O}^T\bm{\xi}\right)$ about $\bm{\xi}$. To achieve this goal through the gradient descent method, the information $\mathcal{L_O}^T\nabla f(\bm{x})$ is required in the design of $\bm{\xi}$, i.e., agent $i$ requires the value $\frac{df_j}{dx_j}(x_j)$ of its out-neighbor $j$, which however, cannot be obtained, since an agent can only receive information from its in-neighbors. To overcome this difficulty, in (\ref{eq.law.x.i}), observers are designed for each agent to estimate the gradient $\nabla f(\bm{x})$, such that the auxiliary variable of each agent can update based on its local information. 
\end{remark}

Integrating (\ref{eq.law.x.i}) from $t_k$ to $t_{k+1}$ yields
\begin{equation}\label{eq.law.xi.i}
\begin{aligned}
{\xi}_{i}(t_{k+1})&={\xi}_{i}(t_{k})+{\beta}\big(d_i^{\text{out}}\psi_{ii}(t_k)-\sum_{j=1}^Na_{ji}\psi_{ij}(t_k)\big),\\
\psi_{ij}(t_{k+1})&=\psi_{ij}(t_{k})-\frac{1}{d_i^{\text{in}}+a_{ij}}\bigg(\sum_{i=m}^{n}a_{im}(\psi_{ij}(t_k)\\
&-\psi_{mj}(t_k))+a_{ij}\big(\psi_{ij}(t_k)-\frac{df_j}{dx_j}(x_j(t_k))\big)\bigg).
\end{aligned}
\end{equation}

Define $\bm{\xi}{=}[\xi_1,{\cdots},\xi_n]^T{\in}\mathbb{R}^n$, $\bm{\psi}_i{=}[\psi_{i1},{\cdots},\psi_{in}]^T{\in}\mathbb{R}^n$, $\bm{\psi}{=}[\bm{\psi}_1^T,{\cdots},\bm{\psi}_n^T]^T{\in}\mathbb{R}^{n^2}$.
Then, (\ref{eq.law.xi.i}) can be rewritten as
\begin{equation}\label{eq.xi.t_k+1.t_k.1}
\begin{aligned}
\bm{\xi}(t_{k+1})=&\bm{\xi}(t_{k})+{\beta}\hat{\mathcal{L}}_0\bm{\psi}(t_k),\\
\bm{\psi}(t_{k+1})
{=}&\bm{\psi}(t_k){-}\Gamma(\mathcal{L}{\otimes} I_{n}+A_d)\big(\bm{\psi}(t_k)\\
&{-}\bm{1}_{n}{\otimes}\nabla f(\bm{x}(t_k))\big),
\end{aligned}
\end{equation}
where $\hat{\mathcal{L}}_0=\diag\{(\mathcal{L_O}^T)_1,\cdots,(\mathcal{L_O}^T)_n\}$ with $(\mathcal{L_O}^T)_i$ denoting the $i$th row of the matrix $\mathcal{L_O}^T$, and
\begin{equation*}
\begin{aligned}
\Gamma=&\diag\{\frac{1}{d_{1}^{\text{in}}+a_{11}},\cdots,\frac{1}{d_{1}^{\text{in}}+a_{1n}},\frac{1}{d_{2}^{\text{in}}+a_{21}},\\
&{\cdots},\frac{1}{d_{2}^{\text{in}}+a_{2n}},{\cdots},\frac{1}{d_{n}^{\text{in}}+a_{nn}}
\}{\in}\mathbb{R}^{n^2\times n^2},\\
A_d=&\diag\{a_{11},{\cdots},a_{1n},a_{21},{\cdots},a_{2n},{\cdots},a_{nn}\}{\in}\mathbb{R}^{n^2\times n^2}.
\end{aligned}
\end{equation*}
Define the estimation error
\begin{equation}
\bm{e_\psi}(t)=\bm{\psi}(t)-\bm{1}_n{\otimes}\nabla f(\bm{x}(t)).
\end{equation}
Then, the iteration of $\bm{e_\psi}$ can be given by
\begin{equation}
\begin{aligned}
&\bm{e_{\psi}}(t_{k+1})\\
=&\bm{\psi}(t_k)-\Gamma(\mathcal{L}{\otimes} I_{n}+A_d)\bm{e_{\psi}}(t_{k})
-\bm{1}_n{\otimes}\nabla f(\bm{x}(t_{k+1}))\\
=&\mathcal{M}\bm{e_\psi}(t_k)-\bm{1}_n{\otimes}\big(\nabla f(\bm{x}(t_{k+1}))-\nabla f(\bm{x}(t_{k}))\big),\\
\end{aligned}
\end{equation}
where $\mathcal{M}=I_{n^2}-\Gamma(\mathcal{L}{\otimes} I_{n}+A_d)$.
If the graph $\mathcal{G}$ is strongly connected, one can derive that $\mathcal{M}$ is a Schur matrix, and there exists a symmetric positive definite matrix $W$, such that $\mathcal{M}^TW\mathcal{M}-W=-I_{n^2}$.

By the definition of $\bm{e_\psi}$, we have
\begin{equation}
\begin{aligned}
&\bm{\xi}(t_{k+1})\\
=&\bm{\xi}(t_{k})+{\beta}\hat{\mathcal{L}}_0\Big(\bm{1}_n{\otimes}\nabla f(\bm{x}(t_k))+\bm{e_\psi}(t_k)\Big)\\
=&\bm{\xi}(t_{k})+{\beta}{\mathcal{L_O}}^T\nabla f(\bm{x}(t_k))+{\beta}\hat{\mathcal{L}}_0\bm{e_\psi}(t_k)\\
\end{aligned}
\end{equation}
where we have used the fact that $\hat{\mathcal{L}}_0(\bm{1}_n{\otimes}\nabla f(\bm{x}))=\mathcal{L_O}^T\nabla f(\bm{x})$ to obtain the last equality.

Now we are ready to present the main result of this section.

\begin{theorem}\label{TheoremTwoMMatrix}\rm
	Suppose that Assumption \ref{AssumptionLip} holds, and graph $\mathcal{G}$ is strongly connected. Then, the distributed algorithm (\ref{eq.law.x.i}), (\ref{SamplingTimeSec2}) and (\ref{SamplingTimeIntervalSec2})
	solves  Problem \ref{problem} if
	\begin{equation}\label{eq.beta}
	\begin{aligned}
	\beta\leq\min\{&\frac{1}{2\|\hat{\mathcal{L}}_0\|^2\big(1+4{l}^2b\|\mathcal{L_O}\|^2+2{{l}}\|\mathcal{L_O}\|^2\big)},\\
	&\frac{1}{4\big(2{l}^2b\|\mathcal{L_O}\|^2+{{l}}\|\mathcal{L_O}\|^2\big)},1\},
	\end{aligned}
	\end{equation}
	where $b{=}\big(2\|\mathcal{M}^TW\|^2+\|W\|\big)n$.
\end{theorem}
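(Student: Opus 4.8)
The plan is to establish the two requirements of Problem~\ref{problem} in turn. The invariance $\sum_{i}x_{i}(t)=C$ on $[0,T_c]$ is exactly Lemma~\ref{lemma.equality.constraint}, so the real work is the convergence claim. The structural fact I would exploit first is that the sampling rule \dref{SamplingTimeSec2}--\dref{SamplingTimeIntervalSec2} makes the factors $1/(t_{k+1}-t_k)$ in \dref{eq.law.x.i} cancel the interval lengths, so the sampled dynamics collapse to the \emph{parameter-free} discrete iterations \dref{eq.law.xi.i}--\dref{eq.xi.t_k+1.t_k.1}; and since $\sum_{k\ge1}T_k=\tfrac{6T_c}{\pi^{2}}\sum_{k\ge1}k^{-2}=T_c$, the sampling instants satisfy $t_k\uparrow T_c$. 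Because $\bm{x}(t)=\bm{x}(0)-\mathcal{L_O}\bm{\xi}(t_k)$ is constant on each $[t_k,t_{k+1})$, it suffices to prove that the \emph{sequence} $\bm{x}(t_k)$ tends to $\bm{x}^{*}$; convergence ``exactly at $T_c$'' is then automatic. Note that no convergence \emph{rate} is needed here, only that the limit is $\bm{x}^{*}$, which is precisely why $l_0$ and the algebraic connectivity of $\mathcal{G}$ do not appear in \dref{eq.beta}.

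The heart of the proof is a discrete Lyapunov analysis of the pair $\bm{\delta}_k:=\bm{x}(t_k)-\bm{x}^{*}$ and $\bm{e}_k:=\bm{e_\psi}(t_k)$. Using $\mathcal{L_O}^{T}\nabla f(\bm{x}^{*})=0$ I would set $\bm{g}_k:=\mathcal{L_O}^{T}\nabla f(\bm{x}(t_k))=\mathcal{L_O}^{T}\big(\nabla f(\bm{x}(t_k))-\nabla f(\bm{x}^{*})\big)$ and rewrite the iterations as $\bm{\delta}_{k+1}=\bm{\delta}_k-\beta\mathcal{L_O}(\bm{g}_k+\hat{\mathcal{L}}_0\bm{e}_k)$ and $\bm{e}_{k+1}=\mathcal{M}\bm{e}_k-\bm{1}_n\otimes\bm{h}_k$ with $\bm{h}_k:=\nabla f(\bm{x}(t_{k+1}))-\nabla f(\bm{x}(t_k))$, noting that Lipschitz continuity of $\nabla f$ gives $\|\bm{h}_k\|\le l\|\bm{x}(t_{k+1})-\bm{x}(t_k)\|\le l\beta\|\mathcal{L_O}\|\big(\|\bm{g}_k\|+\|\hat{\mathcal{L}}_0\|\,\|\bm{e}_k\|\big)$. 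I would use the candidate Lyapunov function $V_k=\big(f(\bm{x}(t_k))-f(\bm{x}^{*})\big)+\bm{e}_k^{T}W\bm{e}_k$, which is nonnegative. For the first term, the descent inequality $f(\bm{y})\le f(\bm{x})+\nabla f(\bm{x})^{T}(\bm{y}-\bm{x})+\tfrac{l}{2}\|\bm{y}-\bm{x}\|^{2}$ together with $\nabla f(\bm{x}(t_k))^{T}(\bm{x}(t_{k+1})-\bm{x}(t_k))=-\beta\|\bm{g}_k\|^{2}-\beta\,\bm{g}_k^{T}\hat{\mathcal{L}}_0\bm{e}_k$, combined with the splittings $\beta|\bm{g}_k^{T}\hat{\mathcal{L}}_0\bm{e}_k|\le\tfrac{\beta}{2}\|\bm{g}_k\|^{2}+\tfrac{\beta}{2}\|\hat{\mathcal{L}}_0\bm{e}_k\|^{2}$ and $\|\bm{g}_k+\hat{\mathcal{L}}_0\bm{e}_k\|^{2}\le 2\|\bm{g}_k\|^{2}+2\|\hat{\mathcal{L}}_0\|^{2}\|\bm{e}_k\|^{2}$, yields $f(\bm{x}(t_{k+1}))-f(\bm{x}(t_k))\le-(\tfrac{\beta}{2}-l\beta^{2}\|\mathcal{L_O}\|^{2})\|\bm{g}_k\|^{2}+(\tfrac{\beta}{2}+l\beta^{2}\|\mathcal{L_O}\|^{2})\|\hat{\mathcal{L}}_0\|^{2}\|\bm{e}_k\|^{2}$. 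For the second term, $\mathcal{M}^{T}W\mathcal{M}-W=-I_{n^{2}}$ and Young's inequality give $\bm{e}_{k+1}^{T}W\bm{e}_{k+1}\le\bm{e}_k^{T}W\bm{e}_k-\tfrac12\|\bm{e}_k\|^{2}+b\|\bm{h}_k\|^{2}$, which is exactly where $b=(2\|\mathcal{M}^{T}W\|^{2}+\|W\|)n$ enters; substituting the bound on $\|\bm{h}_k\|$ and collecting terms gives $V_{k+1}-V_k\le-\alpha_1\|\bm{g}_k\|^{2}-\alpha_2\|\bm{e}_k\|^{2}$ with $\alpha_1=\beta\big(\tfrac12-\beta\|\mathcal{L_O}\|^{2}(l+2bl^{2})\big)$ and an analogous expression $\alpha_2=\tfrac12-\beta\|\hat{\mathcal{L}}_0\|^{2}\big(\tfrac12+\beta\|\mathcal{L_O}\|^{2}(l+2bl^{2})\big)$. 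A short computation shows the first two bounds in \dref{eq.beta} force $\alpha_1\ge\beta/4$ and $\alpha_2\ge1/8$, both positive.

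From $0\le V_{k+1}\le V_k-\alpha_1\|\bm{g}_k\|^{2}-\alpha_2\|\bm{e}_k\|^{2}$ it follows that $V_k$ is non-increasing, hence convergent, and $\sum_k\big(\alpha_1\|\bm{g}_k\|^{2}+\alpha_2\|\bm{e}_k\|^{2}\big)\le V_0<\infty$, so $\bm{g}_k\to0$. To convert this into $\bm{x}(t_k)\to\bm{x}^{*}$ I would invoke the strong-convexity error bound. Since $\bm{1}^{T}\mathcal{L_O}=0$ and $\mathcal{G}$ is strongly connected, $\mathcal{L_O}\mathcal{L_O}^{T}$ is symmetric positive semidefinite with simple zero eigenvalue and kernel $\mathrm{span}\{\bm{1}\}$, so by Lemma~\ref{lemma.matrix} one has $y^{T}\mathcal{L_O}\mathcal{L_O}^{T}y\ge\sigma\|y\|^{2}$ for all $y\in\bm{1}^{\perp}$, with $\sigma>0$ its smallest positive eigenvalue; since $\mathcal{L_O}^{T}\bm{1}=0$ this gives $\|\bm{g}_k\|^{2}=\nabla f(\bm{x}(t_k))^{T}\mathcal{L_O}\mathcal{L_O}^{T}\nabla f(\bm{x}(t_k))\ge\sigma\|P\nabla f(\bm{x}(t_k))\|^{2}$ with $P:=I_n-\tfrac{1}{n}\bm{1}\bm{1}^{T}$. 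Moreover, because $\bm{x}(t_k)-\bm{x}^{*}\in\bm{1}^{\perp}$ (by Lemma~\ref{lemma.equality.constraint} and the constraint defining $\bm{x}^{*}$) and $\nabla f(\bm{x}^{*})=-\lambda^{*}\bm{1}$, monotonicity of $\nabla f$ (from $\nabla^{2}f\succeq l_0 I_n$ in Assumption~\ref{AssumptionLip}) gives $\langle P\nabla f(\bm{x}(t_k)),\bm{x}(t_k)-\bm{x}^{*}\rangle=\langle\nabla f(\bm{x}(t_k))-\nabla f(\bm{x}^{*}),\bm{x}(t_k)-\bm{x}^{*}\rangle\ge l_0\|\bm{x}(t_k)-\bm{x}^{*}\|^{2}$, and Cauchy--Schwarz then yields $\|\bm{x}(t_k)-\bm{x}^{*}\|\le\tfrac{1}{l_0\sqrt{\sigma}}\|\bm{g}_k\|\to0$. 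Since $\bm{x}(t)=\bm{x}(t_k)$ on $[t_k,t_{k+1})$ and $t_k\uparrow T_c$, this gives $\bm{x}(t)\to\bm{x}^{*}$ as $t\to T_c$, which together with Lemma~\ref{lemma.equality.constraint} completes the proof.

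I expect the main difficulty to be the constant-bookkeeping in the Lyapunov step: one has to choose the weight on $\bm{e}_k^{T}W\bm{e}_k$ (here $1$ is enough) and the Young splittings so that, simultaneously, the $\|\bm{g}_k\|^{2}$-budget stays positive — which limits how much the observer feedback $b\|\bm{h}_k\|^{2}$ (itself at most a constant times $bl^{2}\beta^{2}\|\mathcal{L_O}\|^{2}\|\bm{g}_k\|^{2}$) may consume, producing the $2l^{2}b\|\mathcal{L_O}\|^{2}$-type term in the second bound of \dref{eq.beta} — while the $\|\bm{e}_k\|^{2}$-budget also stays positive, which forces $\beta\|\hat{\mathcal{L}}_0\|^{2}$ small and accounts for the prefactor $2\|\hat{\mathcal{L}}_0\|^{2}$ and the extra $\beta\le1$ clause. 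The only other point to be careful with, though routine, is the error-bound inequality above; it is the one place where $l_0$ and $\sigma$ are used, and only qualitatively — consistent with their absence from \dref{eq.beta}. Everything else, in particular that $\mathcal{M}$ is Schur and that a symmetric positive definite $W$ with $\mathcal{M}^{T}W\mathcal{M}-W=-I_{n^{2}}$ exists, is already granted in the discussion preceding the theorem.
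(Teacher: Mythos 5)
Your proposal is correct, and its core coincides with the paper's own proof: the same Lyapunov function $V_k=\bm{e_\psi}^T(t_k)W\bm{e_\psi}(t_k)+f(\bm{x}(t_k))-f(\bm{x}^*)$, the same Young splittings, and the same bookkeeping producing $b$ and the decrement $V_{k+1}-V_k\le-\frac{\beta}{4}\|{\mathcal{L_O}}^T\nabla f(\bm{x}(t_k))\|^2-\alpha_2\|\bm{e_\psi}(t_k)\|^2$ under \dref{eq.beta} (your bound $\alpha_2\ge 1/8$ is safe but loose; the stated conditions actually give $1/4$, matching \dref{eq.dV}). The genuine divergence is the endgame. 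The paper converts $\|{\mathcal{L_O}}^T\nabla f(\bm{x}(t_k))\|^2$ into a multiple of $f(\bm{x}(t_k))-f(\bm{x}^*)$ via the gradient-dominance inequality \dref{eq.grad.g.norm}, obtained by sandwiching $f(\bm{x}(t_k))-f(\bm{x}^*)$ between a convexity upper bound and a strong-convexity lower bound along the range of $\mathcal{L_O}$, which yields the geometric decay $V(t_k)\le(1-\varepsilon)^kV(0)$. You instead telescope the nonnegative decrements to get summability, hence ${\mathcal{L_O}}^T\nabla f(\bm{x}(t_k))\to0$, and pass to $\bm{x}(t_k)\to\bm{x}^*$ through the error bound $\|\bm{x}(t_k)-\bm{x}^*\|\le\frac{1}{l_0\sqrt{\sigma}}\|{\mathcal{L_O}}^T\nabla f(\bm{x}(t_k))\|$; your projection and monotonicity computation there is sound, since $\bm{x}(t_k)-\bm{x}^*\perp\bm{1}$, $\nabla f(\bm{x}^*)\in\mathrm{span}\{\bm{1}\}$, and $\ker(\mathcal{L_O}\mathcal{L_O}^T)=\mathrm{span}\{\bm{1}\}$ for a strongly connected graph. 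Both routes prove Theorem \ref{TheoremTwoMMatrix}, and your observation that only qualitative convergence of the sequence is needed (so $l_0$ and $\lambda_2$ need not enter \dref{eq.beta}) is accurate; what the rate-free ending forfeits is the explicit contraction factor $(1-\varepsilon)$, which the paper reuses verbatim to quantify the accuracy $(1-\varepsilon)^{k_\epsilon}$ in Corollary \ref{TheoremPartialContain}, so to cover that corollary you would need to reinstate \dref{eq.grad.g.norm}.
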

\begin{proof}
	Consider the Lyapunov function
	\begin{equation}\label{lya1}
	\begin{aligned}
	V(t)&=\bm{e_\psi}^T(t)W\bm{e_\psi}(t)+f(\bm{x}(t))-f(\bm{x}^*).\\
	\end{aligned}
	\end{equation}
Then,
	\begin{equation}\label{dlya11}
	\begin{aligned}
	&V(t_{k+1})-V(t_{k})\\
	=&\bm{e_\psi}^T(t_{k+1})W\bm{e_\psi}(t_{k+1})-\bm{e_\psi}^T(t_k)W\bm{e_\psi}(t_k)\\
	&+f(\bm{x}(t_{k+1}))-f(\bm{x}(t_k)).
	\end{aligned}
	\end{equation}
	
	It is not difficult to calculate that
	\begin{equation}\label{eq.depsi1}
	\begin{aligned}
	&\bm{e_\psi}^T(t_{k+1})W\bm{e_\psi}(t_{k+1})-\bm{e_\psi}^T(t_{k})W\bm{e_\psi}(t_{k})\\
	=&\bm{e_\psi}^T(t_k)(\mathcal{M}^TW\mathcal{M}-W)\bm{e_\psi}(t_k)\\
	&-2\bm{e_\psi}^T(t_k)\mathcal{M}^TW(\bm{1}_n{\otimes}(\nabla f(\bm{x}(t_{k+1}))-\nabla f(\bm{x}(t_{k}))))\\
	&+(\bm{1}_n{\otimes}(\nabla f(\bm{x}(t_{k+1}))-\nabla f(\bm{x}(t_{k}))))^TW\\
	&\times(\bm{1}_n{\otimes}(\nabla f(\bm{x}(t_{k+1}))-\nabla f(\bm{x}(t_{k}))))\\
	\leq &-\|\bm{e_\psi}(t_k)\|^2\\
	&+2\|\bm{e_\psi}(t_k)\|\|\mathcal{M}^TW\|\sqrt{n}\|\nabla f(\bm{x}(t_{k+1}))-\nabla f(\bm{x}(t_{k}))\|\\
	&+\|W\|n\|\nabla f(\bm{x}(t_{k+1}))-\nabla f(\bm{x}(t_{k}))\|^2\\
	\leq &-\frac{1}{2}\|\bm{e_\psi}(t_k)\|^2+b\|\nabla f(\bm{x}(t_{k+1}))-\nabla f(\bm{x}(t_{k}))\|^2,
	\end{aligned}
	\end{equation}
	where we have used
	\begin{equation*}
	\begin{aligned}
	&2\|\bm{e_\psi}(t_k)\|\|\mathcal{M}^TW\|\sqrt{n}\|\nabla f(\bm{x}(t_{k+1}))-\nabla f(\bm{x}(t_{k}))\|\\
	\leq&\frac{1}{2}\|\bm{e_\psi}(t_k)\|^2+2\|\mathcal{M}^TW\|^2n\|\nabla f(\bm{x}(t_{k+1}))-\nabla f(\bm{x}(t_{k}))\|^2
	\end{aligned}
	\end{equation*}
	to obtain the last inequality.
Under Assumption \ref{AssumptionLip}, we have
	\begin{equation}\label{eq.gradf}
	\begin{aligned}
	&\|\nabla f(\bm{x}(t_{k+1}))-\nabla f(\bm{x}(t_{k}))\|^2\\
	\leq&{l}^2\|\bm{x}(t_{k+1})-\bm{x}(t_{k})\|^2\\
	\leq&{l}^2\|\mathcal{L_O}\|^2\|\bm{\xi}(t_{k+1})-\bm{\xi}(t_{k})\|^2\\
	=&\beta^2{l}^2\|\mathcal{L_O}\|^2\left\|{\mathcal{L_O}}^T\nabla f(\bm{x}(t_k))-\hat{\mathcal{L}}_0\bm{e_\psi}(t_k)\right\|^2\\
	\leq&2\beta^2{l}^2\|\mathcal{L_O}\|^2( \|{\mathcal{L_O}}^T\nabla f(\bm{x}(t_k))\|^2+\|\hat{\mathcal{L}}_0\|^2\|\bm{e_\psi}(t_k)\|^2).
	\end{aligned}
	\end{equation}
	Substituting (\ref{eq.gradf}) into (\ref{eq.depsi1}) yields
	\begin{equation}\label{eq.depsi2}
	\begin{aligned}
	&\bm{e_\psi}^T(t_{k+1})W\bm{e_\psi}(t_{k+1})-\bm{e_\psi}^T(t_{k})W\bm{e_\psi}(t_{k})\\
	\leq&-\frac{1}{2}\|\bm{e_\psi}(t_k)\|^2+2\beta^2{l}^2b\|\mathcal{L_O}\|^2\big( \|{\mathcal{L_O}}^T\nabla f(\bm{x}(t_k))\|^2\\
	&+\|\hat{\mathcal{L}}_0\|^2\|\bm{e_\psi}(t_k)\|^2\big).
	\end{aligned}
	\end{equation}

	Assumption \ref{AssumptionLip} implies that $\nabla^2 f(\bm{x})\preceq {l} I,\forall \bm{x}{\in}\mathbb{R}^n$.
	Using this fact, we can perform a quadratic expansion of $f(\bm{x}(t_{k+1}))$ around $f(\bm{x}(t_k))$ and obtain the following inequality :
	\begin{equation}\label{eq.df1}
	\begin{aligned}
	&f(\bm{x}(t_{k+1}))\\
	\leq & f(\bm{x}(t_k))+\nabla^Tf(\bm{x}(t_k))(\bm{x}(t_{k+1})-\bm{x}(t_k))\\
	&+\frac{{l}}{2}(\bm{x}(t_{k+1})-\bm{x}(t_k))^T(\bm{x}(t_{k+1})-\bm{x}(t_k))\\
	=& f(\bm{x}(t_k))-\beta \nabla^T f(\bm{x}(t_k)){\mathcal{L_O}}\left({\mathcal{L_O}}^T\nabla f(\bm{x}(t_k))+\hat{\mathcal{L}}_0\bm{e_\psi}(t_k)\right)\\
	&+\frac{{l}\beta^2}{2}\left\|\mathcal{L_O}\left({\mathcal{L_O}}^T\nabla f(\bm{x}(t_k))+\hat{\mathcal{L}}_0\bm{e_\psi}(t_k)\right)\right\|^2\\
	\leq & f(\bm{x}(t_k))-{\beta}\left\|{\mathcal{L_O}}^T\nabla f(\bm{x}(t_k))\right\|^2\\
	&+{\beta}\left\|{\mathcal{L_O}}^T\nabla f(\bm{x}(t_k))\right\|\left\|\hat{\mathcal{L}}_0\bm{e_\psi}(t_k)\right\|\\
	&+\frac{{l}\beta^2}{2}\|\mathcal{L_O}\|^2 \left\|{\mathcal{L_O}}^T\nabla f(\bm{x}(t_k))+\hat{\mathcal{L}}_0\bm{e_\psi}(t_k)\right\|^2\\
	\leq &f(\bm{x}(t_k))-\frac{\beta}{2}\|{\mathcal{L_O}}^T\nabla f(\bm{x}(t_k))\|^2+\frac{\beta}{2}\|\hat{\mathcal{L}}_0\|^2\|\bm{e_\psi}(t_k)\|^2\\
	&+{{l}\beta^2}\|\mathcal{L_O}\|^2\left( \|{\mathcal{L_O}}^T\nabla f(\bm{x}(t_k))\|^2+\|\hat{\mathcal{L}}_0\|^2\|\bm{e_\psi}(t_k)\|^2\right),\\
	\end{aligned}
	\end{equation}
	where we have used
	\begin{equation*}
	\begin{aligned}
	&{\beta}\|{\mathcal{L_O}}^T\nabla f(\bm{x}(t_k))\|\|\hat{\mathcal{L}}_0\bm{e_\psi}(t_k)\|\\
	\leq&\frac{\beta}{2}\|{\mathcal{L_O}}^T\nabla f(\bm{x}(t_k))\|^2+\frac{\beta}{2}\|\hat{\mathcal{L}}_0\|^2\|\bm{e_\psi}(t_k)\|^2
	\end{aligned}
	\end{equation*}
	to obtain the last inequality.
	
	Subsitituting (\ref{eq.depsi2}) and (\ref{eq.df1}) into (\ref{dlya11}) yields
	\begin{equation}
	\begin{aligned}
	&V(t_{k+1})-V(t_{k})\\
	\leq&-\frac{1}{2}\|\bm{e_\psi}(t_k)\|^2-\frac{\beta}{2}\|{\mathcal{L_O}}^T\nabla f(\bm{x}(t_k))\|^2\\
	&+\beta\big(\frac{1}{2}+2\beta{l}^2b\|\mathcal{L_O}\|^2+{{l}\beta}\|\mathcal{L_O}\|^2\big)\|\hat{\mathcal{L}}_0\|^2\|\bm{e_\psi}(t_k)\|^2\\
	&+\beta^2\big(2{l}^2b\|\mathcal{L_O}\|^2+{{l}}\|\mathcal{L_O}\|^2\big) \|{\mathcal{L_O}}^T\nabla f(\bm{x}(t_k))\|^2.
	\end{aligned}
	\end{equation}
	Noting that $\beta$ satisfies \dref{eq.beta},
	we can obtain  that
	\begin{equation}\label{eq.dV}
	\begin{aligned}
	&V(t_{k+1})-V(t_{k})\\
	\leq&-\frac{1}{4}\|\bm{e_\psi}(t_k)\|^2-\frac{\beta}{4}\|{\mathcal{L_O}}^T\nabla f(\bm{x}(t_k))\|^2,
	\end{aligned}
	\end{equation}
which shows that $V(t_{k+1}){\leq}V(t_k),~\forall k{\in}\mathbb{N}$.
	Next, we will analyse the convergence rate of the Lyapunov function.
	From the convex property of function $f(\bm{x})$, one has
	\begin{equation}\label{convex_ineq}
	\begin{aligned}
	f(\bm{x}(t_k)){\leq}&f(\bm{x}^*){+}\nabla f(\bm{x}(t_k))^T(\bm{x}(t_k){-}\bm{x}^*)\\
	{=}&f(\bm{x}^*){-}\nabla f(\bm{x}(t_k))^T\mathcal{L_O}(\bm{\xi}(t_k){-}\bm{\xi}^*),
	\end{aligned}
	\end{equation}
	where $\bm{\xi}^*{\in}\mathbb{R}^n$ is a constant vector which satisfies $\bm{x}^*{=}\bm{x}(0){-}\mathcal{L_O}\bm{\xi}^*$.
	Define $\bm{v}_1$ the right unit eigenvector of $\mathcal{L_O}$ corresponding to the zero eigenvalue, i.e., $\mathcal{L_O}\bm{v}_1{=}0$ and $\bm{v}_1^T\bm{v}_1{=}1$. Define a set of unit vectors $\bm{v}_2,{\cdots},\bm{v}_n$ which form a normal orthonormal basis together with $\bm{v}_1$.
	Then, there exist a set of real numbers $b_1,b_2,\cdots,b_n$ at $t_k$ such that
	\begin{equation}\label{eq.b}
	\bm{\xi}(t_k){-}\bm{\xi}^*=\sum_{i=1}^{n}b_i(t_k)\bm{v}_i.
	\end{equation}
Substituting (\ref{eq.b}) into \dref{convex_ineq} yields
	\begin{equation}\label{eq.fxk-f*.leq1}
	\begin{aligned}
	&f(\bm{x}(t_k))-f(\bm{x}^*)\\
	{\leq}&{-}\nabla^T f(\bm{x}(t_k))\mathcal{L_O}\sum_{i=2}^{n}b_i(t_k)\bm{v}_i\\
	\leq&\left\|\nabla^T f(\bm{x}(t_k))\mathcal{L_O}\right\|\left\|\sum_{i=2}^{n}b_i(t_k)\bm{v}_i\right\|.
	\end{aligned}
	\end{equation}
	Under Assumption \ref{AssumptionLip}, one can derive that
	\begin{equation*}
	\begin{aligned}
	&f(\bm{x}(t_k))-f(\bm{x}^*)\\
	\geq& \nabla^T f(\bm{x}^*)(\bm{x}(t_k)-\bm{x}^*)+ \frac{l_0}{2}\|\bm{x}(t_k)-\bm{x}^*\|^2.
	\end{aligned}
	\end{equation*}
	Note that
	\begin{equation*}
	\nabla^T f(\bm{x}^*)(\bm{x}(t_k)-\bm{x}^*)=-\nabla^T f(\bm{x}^*)\mathcal{L_O}(\bm{\xi}(t_k)-\bm{\xi}^*)=0.
	\end{equation*}
	Then,
	\begin{equation}\label{eq.fxk-f*.geq1}
	\begin{aligned}
	&f(\bm{x}(t_k))-f(\bm{x}^*)\\
	\geq& \frac{l_0}{2}\|\bm{x}(t_k)-\bm{x}^*\|^2\\
	=& \frac{l_0}{2}\left\|\mathcal{L_O}\sum_{i=2}^{n}b_i(t_k)\bm{v}_i\right\|^2\\
	\geq& \frac{l_0}{2}\lambda_2(\mathcal{L_O}^T\mathcal{L_O})\left\|\sum_{i=2}^{n}b_i(t_k)\bm{v}_i\right\|^2,
	\end{aligned}
	\end{equation}
	where  $\lambda_{2}(\mathcal{L_O}^T\mathcal{L_O})$ denotes the minimal non-zero eigenvalue of $\mathcal{L_O}^T\mathcal{L_O}$, and the last inequality is obtained by using Lemma \ref{lemma.matrix}.
	Combining (\ref{eq.fxk-f*.leq1}) and (\ref{eq.fxk-f*.geq1}), we can obtain
	\begin{equation*}
	\begin{aligned}
	&\|\nabla^T f(\bm{x}(t_k))\mathcal{L_O}\|^2\left\|\sum_{i=2}^{n}b_i(t_k)\bm{v}_i\right\|^2\\
	\geq& (f(\bm{x}(t_k))-f(\bm{x}^*))^2\\
	\geq& \frac{l_0}{2}\lambda_2(\mathcal{L_O}^T\mathcal{L_O})\left\|\sum_{i=2}^{n}b_i(t_k)\bm{v}_i\right\|^2(f(\bm{x}(t_k))-f(\bm{x}^*)),
	\end{aligned}
	\end{equation*}
	which gives
	\begin{equation}\label{eq.grad.g.norm}
	\begin{aligned}
	&\|\nabla^T f(\bm{x}(t_k))\mathcal{L_O}\|^2\\
	\geq& \frac{l_0}{2}\lambda_2(\mathcal{L_O}^T\mathcal{L_O})(f(\bm{x}(t_k))-f(\bm{x}^*)).
	\end{aligned}
	\end{equation}
	Substituting (\ref{eq.grad.g.norm}) into (\ref{eq.dV}) yields
	\begin{equation}\label{eq.dV2}
	\begin{aligned}
	&V(t_{k+1})-V(t_{k})\\
	\leq&-\frac{1}{4}\|\bm{e_\psi}(t_k)\|^2-\frac{\beta l_0\lambda_2(\mathcal{L_O}^T\mathcal{L_O})}{8}(f(\bm{x}(t_k))-f(\bm{x}^*))\\
	\leq&-\varepsilon V(t_k)\\
	\end{aligned}
	\end{equation}
	with
	\begin{equation*}
	\begin{aligned}
	\varepsilon=\min\{\frac{1}{4\|W\|},\frac{\beta l_0\lambda_2(\mathcal{L_O}^T\mathcal{L_O})}{8}\}.
	\end{aligned}
	\end{equation*}
	
	Then, one can obtain that $V(t_k)\leq(1-\varepsilon)^kV(0)$, and thus $
	\lim\limits_{k\rightarrow\infty}f(\bm{x}(t_k))=f(\bm{x}^*).
	$
	The above discussion illustrates the convergence properties
	only at sampling time instants
	$t_k, k {\in}\mathbb{N}$. Next, we focus on those at
	$t{\in}[t_k, t_{k+1})$. From (\ref{eq.law.x.i}), one can observe that $\bm{x}(t){=}\bm{x}(t_k),t{\in}[t_k,t_{k+1})$. Then, one can conclude that
	\begin{equation}\label{lim ft}
	\lim\limits_{t\in[t_k,t_{k{+}1}),k\rightarrow\infty}f(\bm{x}(t)){=}f(\bm{x}^*).
	\end{equation}
	From \dref{SamplingTimeIntervalSec2}, one can calculate that $\sum_{i=1}^{\infty}T_k=T_c$. The above analysis indicates that
	$f\left(\bm{x}(T_c)\right)=f(\bm{x}^*)$, which means that $\bm{x}$ will converge to the optimal value at the specific time $T_c$ under the proposed algorithm.	Recalling Lemma \ref{lemma.equality.constraint}, one can conclude that the proposed algorithm (\ref{eq.law.x.i}), (\ref{SamplingTimeSec2}) and (\ref{SamplingTimeIntervalSec2}) solves Problem \ref{problem}.
\end{proof}
\begin{remark}\rm\label{remark.advantages}
	Unlike the existing results on fixed-time distributed optimization \cite{ChenLiAuto2018ConDis,fixed-time-ningboda} that only the upper bound of the settling time can be conservatively estimated, the exact settling time can be precisely pre-specified in this paper {by using the sampling interval sequence (\ref{SamplingTimeIntervalSec2}), which  is inspired by the motion-planning-based specified-time consensus protocol in \cite{zhaoyu}.}  Moreover, different from \cite{ChenLiAuto2018ConDis,fixed-time-ningboda} where the upper bound of the settling time depends on algorithm parameters and communication topologies, the settling time in this paper can be determined independently, since it is totally decoupled with the algorithm parameters and communication topologies, which is a significant improvement for practical applications.
\end{remark}

\begin{remark}\rm\label{remark.sampling interval}
	From the proof of Theorem 1, one can observe that the design of the sequence of sampling time instants is not unique. Under the proposed framework, the only requirement  about the sampling interval sequence is that the corresponding series converges to $T_c$, i.e., 
	$\sum_{k=1}^{\infty}T_k{=}T_c$. For example,
	the sampling interval sequence can be also arranged as $(T_k)_{k=0}^\infty{:=}T_c(1-b)b^{k-1}$ with $b{\in}(0,1)$ being a constant, which  corresponds to the well-known power series.
	Note that $\lim_{k\rightarrow\infty}T_k=0$ is a necessary condition for the series 	$\sum_{k=1}^{\infty}T_k$ to be convergent.
	The sampling interval sequence whose series is convergent will induce Zeno behavior, since  infinite
	times of sampling  happen in a finite time interval, and the sampling interval tends to zero at the ending phase. 
However, it should be clarified that even in such a case, a significant improvement is still achieved from the perspective of communications, compared with the existing fixed-time distributed optimization algorithms in \cite{fixed-time-ningboda,ChenLiAuto2018ConDis} which require continuous-time neighboring information during the whole process.
\end{remark}

Next, we will show how to modify the sampling interval sequence to avoid Zeno behavior, such that the distributed optimization algorithm can be implemented into practical discrete-time communicating networked systems. One can modify the sampling interval sequence as follows:
\begin{equation}\label{SamplingTimeIntervalPractical}
T_k=\begin{cases}
\frac{6}{(\pi k)^2}T_{c}, \quad& k=1,\cdots,k_\epsilon,\\
\epsilon,& k=k_\epsilon+1,k_\epsilon+2,\cdots,
\end{cases}
\end{equation}
where $\epsilon$ is a small positive constant, and $k_\epsilon$ is a sufficiently large positive integer, which will affect the convergence accuracy. In \dref{SamplingTimeIntervalPractical}, periodical sampling is taken to replace the previous sampling interval after $k_\epsilon$ times of sampling. Therefore, $T_k$ defined in \dref{SamplingTimeIntervalPractical} will no longer converge to zero and thus the Zeno behavior is avoided.

\begin{corollary}\label{TheoremPartialContain}
	Suppose that Assumption \ref{AssumptionLip} holds, and graph $\mathcal{G}$ is strongly connected. Then, the distributed algorithm \dref{eq.law.x.i},  \dref{SamplingTimeSec2} and \dref{SamplingTimeIntervalPractical} approximately solves Problem \ref{problem} with the convergence accuracy
	\begin{equation}\label{eq.fTc-f*}
	\begin{aligned}
&f(\bm{x}(T_c))-f(\bm{x}^*)\\
\leq&(1-\varepsilon)^{k_\epsilon}\big(\bm{e_\psi}(0)^TW\bm{e_\psi}(0)+f(\bm{x}(0))-f(\bm{x}^*)\big)
	\end{aligned}
	\end{equation} if $\beta$ satisfies inequality (\ref{eq.beta}).
\end{corollary}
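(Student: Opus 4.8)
The plan is to leverage the fact that the sampled‑data recursion underlying Theorem \ref{TheoremTwoMMatrix} is completely insensitive to the lengths of the sampling intervals, so that the Lyapunov contraction established there transfers verbatim to the modified schedule \dref{SamplingTimeIntervalPractical}; the only genuinely new ingredient will be a counting argument showing that at least $k_\epsilon$ samples occur before the instant $T_c$.

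First I would record the key structural observation: integrating the continuous dynamics in \dref{eq.law.x.i} over an interval $[t_k,t_{k+1})$ cancels the $1/(t_{k+1}-t_k)$ factors, yielding the recursion \dref{eq.law.xi.i} and hence \dref{eq.xi.t_k+1.t_k.1}, in which the interval length $T_k$ does not appear. Consequently, with $V(t)$ as in \dref{lya1} and $\beta$ obeying \dref{eq.beta}, every inequality in the proof of Theorem \ref{TheoremTwoMMatrix} from \dref{dlya11} through \dref{eq.dV2} holds unchanged, so that $V(t_{k+1})-V(t_k)\le-\varepsilon V(t_k)$ for all $k\in\mathbb{N}$ with the same $\varepsilon=\min\{1/(4\|W\|),\,\beta l_0\lambda_2(\mathcal{L_O}^T\mathcal{L_O})/8\}$. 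In particular $V$ is nonincreasing along samples and $V(t_k)\le(1-\varepsilon)^kV(0)$ with $V(0)=\bm{e_\psi}(0)^TW\bm{e_\psi}(0)+f(\bm{x}(0))-f(\bm{x}^*)$; moreover, from $\mathcal{M}^TW\mathcal{M}-W=-I_{n^2}$ one has $W\succeq I_{n^2}$, hence $\varepsilon\in(0,1)$ and the bound genuinely decays.

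Next I would pin down the sampling index in force at time $T_c$. Since $\sum_{l=1}^{\infty}6/(\pi l)^2=1$, truncating the series gives $t_{k_\epsilon}=\sum_{l=1}^{k_\epsilon}\frac{6}{(\pi l)^2}T_c=T_c\big(1-\sum_{l>k_\epsilon}\frac{6}{(\pi l)^2}\big)<T_c$, and because all later intervals equal $\epsilon>0$ there is a unique index $k^\star\ge k_\epsilon$ with $t_{k^\star}\le T_c<t_{k^\star+1}$. As $\bm{x}(t)=\bm{x}(t_{k^\star})$ on $[t_{k^\star},t_{k^\star+1})$, we get $\bm{x}(T_c)=\bm{x}(t_{k^\star})$, whence $f(\bm{x}(T_c))-f(\bm{x}^*)\le V(t_{k^\star})\le(1-\varepsilon)^{k^\star}V(0)\le(1-\varepsilon)^{k_\epsilon}V(0)$, which is precisely \dref{eq.fTc-f*}. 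Since the identity $\bm{x}(t)=\bm{x}(0)-\mathcal{L_O}\bm{\xi}(t_k)$ is independent of the schedule, Lemma \ref{lemma.equality.constraint} still yields \dref{eq.problem.constaint}, so Problem \ref{problem} is solved to the stated accuracy.

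The step I expect to require the most care is the bookkeeping just described: unlike \dref{SamplingTimeIntervalSec2}, the modified schedule \dref{SamplingTimeIntervalPractical} does not sum to $T_c$ — it in fact diverges — so one must argue explicitly that the index active at $T_c$ is at least $k_\epsilon$, which is exactly what converts the $k_\epsilon$ "fast" contraction steps into the error bound \dref{eq.fTc-f*}. Everything else is a direct re‑use of Theorem \ref{TheoremTwoMMatrix} once the interval‑independence of its sampled‑data recursion, and therefore of its Lyapunov decrement, has been noted.
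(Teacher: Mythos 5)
Your proof is correct and follows exactly the route the paper intends (the paper omits the proof of this corollary): the sampled-data recursion and hence the Lyapunov decrement $V(t_{k+1})\le(1-\varepsilon)V(t_k)$ are independent of the interval lengths, and since $\sum_{l=1}^{k_\epsilon}\frac{6}{(\pi l)^2}T_c<T_c$ the sample index active at $T_c$ is at least $k_\epsilon$, giving \dref{eq.fTc-f*}. Your explicit counting argument and the observation $W\succeq I_{n^2}$ (so $\varepsilon<1$) are welcome details that the paper leaves implicit.
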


\begin{remark}\rm\label{remark.practical}
	To avoid Zeno behavior, a modified sampling time interval is proposed in (\ref{SamplingTimeIntervalPractical}), which makes the proposed algorithm able to be implemented into practical discrete-time communicating networked systems.
	Inequality (\ref{eq.fTc-f*}) reveals that the convergence accuracy depends on the total steps of sampling within the specified time interval. 
	Although
	the algorithm will no longer accurately converge to the optimal point at the specified time, one can always properly arrange the sampling interval sequence according to (\ref{SamplingTimeIntervalPractical}) with the total steps of sampling big enough, such that the convergence precision of the proposed algorithm can meet the practical demands.
\end{remark}
\section{Specified-time distributed optimization for undirected MANs}\label{SectionFour}
In Section \ref{SectionThree}, a specified-time distributed optimization algorithm has been proposed for general directed MANs. A design structure is established to meet the global equality constraint, under which  each agent updates its  auxiliary variable based on the estimated derivations of the local objective functions of its out-neighbors to make its own state converge to the optimal value.

In the special case of undirected MANs, the Laplacian matrix is symmetric, and the out-neighbors of an agent are also the in-neighbors of the agent.
Under this circumstance, it is natural to use the neighbouring information  directly rather than the estimated one, such that the observer design can be removed and the computational cost can be greatly reduced.
In view of this, we design the following reduced-order continuous-time algorithm based on sampled-data communication:
\begin{equation}\label{eq.law.x.i.2}
\begin{aligned}
x_{i}(t)&=x_{i}(0){-}\sum_{j=1}^n a_{ij}(\xi_{i}(t_k){-}\xi_{j}(t_k)),\\
\dot{\xi}_{i}(t)&=\frac{\beta}{t_{k{+}1}{-}t_{k}}\sum_{j=1}^n a_{ij}\left(\frac{\mathrm{d}f_{i}}{\mathrm{d}x_{i}}(x_{i}(t_k))-\frac{\mathrm{d}f_{j}}{\mathrm{d}x_{j}}(x_{j}(t_k))\right),\\
t\in&[t_k,t_{k+1}),~ k\in\mathbb{N},
\end{aligned}
\end{equation}
where $\beta$ is a positive constant satisfying $\beta{\leq} 1/(l\|\mathcal{L\|}^2)$,  $\xi_i(0){=}0$, and other variables are defined the same as in the previous section.


\begin{theorem}
	Suppose that Assumptions \ref{AssumptionLip} holds, and graph $\mathcal{G}$ is undirected and connected. Then, the distributed algorithm (\ref{SamplingTimeSec2}), (\ref{SamplingTimeIntervalSec2}) and (\ref{eq.law.x.i.2})
	solves Problem \ref{problem}.
\end{theorem}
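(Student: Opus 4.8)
The plan is to follow the proof of Theorem \ref{TheoremTwoMMatrix}, which simplifies considerably here because the gradient observers and the estimation error $\bm{e_\psi}$ are absent. First I would put (\ref{eq.law.x.i.2}) in compact form. Since the graph is undirected, $\mathcal{L_O}=\mathcal{L}=\mathcal{L}^T$ is positive semidefinite with a simple zero eigenvalue, and integrating the $\bm{\xi}$-dynamics over $[t_k,t_{k+1})$ gives $\bm{\xi}(t_{k+1})=\bm{\xi}(t_k)+\beta\mathcal{L}\nabla f(\bm{x}(t_k))$ together with $\bm{x}(t)=\bm{x}(0)-\mathcal{L}\bm{\xi}(t_k)$ for $t\in[t_k,t_{k+1})$. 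Because $\bm{1}^T\mathcal{L}=0$, the equality constraint (\ref{eq.problem.constaint}) is met for all $t\geq 0$, exactly as in Lemma \ref{lemma.equality.constraint}. Moreover $\bm{x}(t_{k+1})-\bm{x}(t_k)=-\beta\mathcal{L}^2\nabla f(\bm{x}(t_k))$, so the iteration is a gradient descent step on the composite function $f(\bm{x}(0)-\mathcal{L}\bm{\xi})$ in $\bm{\xi}$.

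Next I would use the Lyapunov function $V(t)=f(\bm{x}(t))-f(\bm{x}^*)$ (no quadratic error term is needed now). Invoking $\nabla^2 f(\bm{x})\preceq l I$ from Assumption \ref{AssumptionLip}, a quadratic expansion of $f$ about $\bm{x}(t_k)$, the identity $\nabla^T f(\bm{x}(t_k))\mathcal{L}^2\nabla f(\bm{x}(t_k))=\|\mathcal{L}\nabla f(\bm{x}(t_k))\|^2$, and $\|\mathcal{L}^2\nabla f(\bm{x}(t_k))\|^2\leq\|\mathcal{L}\|^2\|\mathcal{L}\nabla f(\bm{x}(t_k))\|^2$, yields
\begin{equation*}
V(t_{k+1})-V(t_k)\leq -\beta\Big(1-\tfrac{l\beta}{2}\|\mathcal{L}\|^2\Big)\|\mathcal{L}\nabla f(\bm{x}(t_k))\|^2\leq -\tfrac{\beta}{2}\|\mathcal{L}\nabla f(\bm{x}(t_k))\|^2 ,
\end{equation*}
where the last inequality uses the stated bound $\beta\leq 1/(l\|\mathcal{L}\|^2)$.

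Then I would reproduce the Polyak--\L{}ojasiewicz-type estimate of (\ref{eq.grad.g.norm}): writing $\bm{x}^*=\bm{x}(0)-\mathcal{L}\bm{\xi}^*$ and expanding $\bm{\xi}(t_k)-\bm{\xi}^*=\sum_{i=1}^n b_i(t_k)\bm{v}_i$ in an orthonormal basis with $\bm{v}_1=\bm{1}/\sqrt{n}$, convexity of $f$ gives $f(\bm{x}(t_k))-f(\bm{x}^*)\leq\|\mathcal{L}\nabla f(\bm{x}(t_k))\|\,\|\sum_{i=2}^n b_i(t_k)\bm{v}_i\|$, while strong convexity together with $\mathcal{L}\nabla f(\bm{x}^*)=0$ and Lemma \ref{lemma.matrix} gives $f(\bm{x}(t_k))-f(\bm{x}^*)\geq\tfrac{l_0}{2}\lambda_2(\mathcal{L}^2)\|\sum_{i=2}^n b_i(t_k)\bm{v}_i\|^2$, with $\lambda_2(\mathcal{L}^2)$ the smallest nonzero eigenvalue of $\mathcal{L}^2$. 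Combining these gives $\|\mathcal{L}\nabla f(\bm{x}(t_k))\|^2\geq\tfrac{l_0\lambda_2(\mathcal{L}^2)}{2}V(t_k)$, hence $V(t_{k+1})\leq(1-\varepsilon')V(t_k)$ with $\varepsilon'=\tfrac{\beta l_0\lambda_2(\mathcal{L}^2)}{4}\in(0,1)$ (the last inclusion follows from $l_0\leq l$ and $\lambda_2(\mathcal{L}^2)\leq\|\mathcal{L}\|^2$). Thus $V(t_k)\leq(1-\varepsilon')^k V(0)\to 0$, and since $\tfrac{l_0}{2}\|\bm{x}(t_k)-\bm{x}^*\|^2\leq V(t_k)$ we conclude $\bm{x}(t_k)\to\bm{x}^*$. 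Because $\bm{x}(t)=\bm{x}(t_k)$ on $[t_k,t_{k+1})$ and $\sum_{k=1}^\infty T_k=T_c$ by (\ref{SamplingTimeIntervalSec2}), this forces $\bm{x}(T_c)=\bm{x}^*$, which with the constraint identity proves that (\ref{SamplingTimeSec2}), (\ref{SamplingTimeIntervalSec2}) and (\ref{eq.law.x.i.2}) solve Problem \ref{problem}.

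The argument is essentially routine: the only delicate points are fixing the constant in the descent inequality so that the simple bound $\beta\leq 1/(l\|\mathcal{L}\|^2)$ is enough, and checking the \L{}ojasiewicz-type inequality via the orthogonal decomposition. Unlike Theorem \ref{TheoremTwoMMatrix}, there is no coupled observer-error recursion to dominate, so I expect no genuinely new difficulty — the main task is just carefully replaying the directed-case estimates with $\mathcal{L_O}$ replaced by the symmetric $\mathcal{L}$ and the $\bm{e_\psi}$ terms deleted.
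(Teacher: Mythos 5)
Your proposal is correct and follows essentially the same route as the paper's proof: the compact form $\bm{x}(t)=\bm{x}(0)-\mathcal{L}\bm{\xi}(t_k)$ with $\bm{\xi}(t_{k+1})-\bm{\xi}(t_k)=\beta\mathcal{L}\nabla f(\bm{x}(t_k))$, the quadratic-upper-bound descent inequality giving $-\tfrac{\beta}{2}\|\mathcal{L}\nabla f(\bm{x}(t_k))\|^2$ under $\beta\leq 1/(l\|\mathcal{L}\|^2)$, the same Polyak--\L{}ojasiewicz-type bound via the orthogonal decomposition (your $\lambda_2(\mathcal{L}^2)$ is the paper's $\lambda_2^2(\mathcal{L})$), and the same geometric-decay-plus-convergent-series conclusion at $T_c$. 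The only additions beyond the paper are minor refinements (explicitly checking $\varepsilon'<1$ and passing from $f(\bm{x}(t_k))\to f(\bm{x}^*)$ to $\bm{x}(t_k)\to\bm{x}^*$ via strong convexity), which are harmless.
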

\begin{proof}
From (\ref{eq.law.x.i.2}), one has
	\begin{equation}\label{eq.x.compact.undirected}
	\bm{x}(t){=}\bm{x}(0){-}\mathcal{L}\bm{\xi}(t_k),\quad t{\in}[t_k,t_{k+1}),\;k{\in}\mathbb{N},
	\end{equation}
which indicates that $\mathbf{1}^T\bm{x}(t){=}\mathbf{1}^T\bm{x}(0){=}C,$ $\forall t{\geq}0$.
	Integrating the second equality in \dref{eq.law.x.i.2} from $t_k$ to $t_{k+1}$ yields
	\begin{equation*}
	\xi_i(t_{k+1}){-}\xi_i(t_k)=\beta\sum_{j\in N_{i}}a_{ij}\left(\frac{\mathrm{d}f_{i}}{\mathrm{d}x_{i}}(x_{i}(t_k))-\frac{\mathrm{d}f_{j}}{\mathrm{d}x_{j}}(x_{j}(t_k))\right),
	\end{equation*}
which can be collectivity rewritten as
	\begin{equation*}\label{xi_k+1 g}
	\bm{\xi}(t_{k+1})-\bm{\xi}(t_{k})=\beta \mathcal{L}\nabla f(\bm{x}(t_k)).
	\end{equation*}
Then, one can obtain the following inequality similar to (\ref{eq.df1}):
	\begin{equation}\label{eq.ineq1}
	\begin{aligned}
&	f(\bm{x}(t_{k+1}))-f(\bm{x}(t_k))\\
	\leq & \nabla^Tf(\bm{x}(t_k))(\bm{x}(t_{k+1})-\bm{x}(t_k))\\
	&+\frac{{l}}{2}(\bm{x}(t_{k+1})-\bm{x}(t_k))^T(\bm{x}(t_{k+1})-\bm{x}(t_k))\\
	\leq & -\beta\|{\mathcal{L}}\nabla f(\bm{x}(t_k))\|^2
+\frac{{l\beta^2}}{2}\|\mathcal{L}\|^2\|{\mathcal{L}}\nabla f(\bm{x}(t_k))\|^2\\
	{\leq}&-\frac{\beta}{2}\|{\mathcal{L}}\nabla f(\bm{x}(t_k))\|^2.
	\end{aligned}
	\end{equation}
Similar to (\ref{eq.grad.g.norm}), one has
\begin{equation}\label{eq.grad.g.norm.2}
\begin{aligned}
&\|{\mathcal{L}}\nabla f(\bm{x}(t_k))\|^2\geq \frac{l_0\lambda_2^2(\mathcal{L})}{2}(f(\bm{x}(t_k))-f(\bm{x}^*)),
\end{aligned}
\end{equation}
where $\lambda_{2}(\mathcal{L})$ denotes the minimal non-zero eigenvalue of $\mathcal{L}$.
Combining (\ref{eq.ineq1}) and (\ref{eq.grad.g.norm.2}), one has
	\begin{equation*}\label{eq.dV2.sec4}
\begin{aligned}
&	f(\bm{x}(t_{k+1}))-f(\bm{x}(t_k))\\
{\leq}&-\frac{\beta l_0\lambda_2^2(\mathcal{L})}{4}(f(\bm{x}(t_k))-f(\bm{x}^*)).
\end{aligned}
\end{equation*} Then, it is clear that
\begin{equation*}
\begin{aligned}
&f(\bm{x}(t_k))-f(\bm{x}^*)\\
\leq&\left(1-\frac{\beta l_0\lambda_2^2(\mathcal{L})}{4}\right)^k\left(f(\bm{x}(0))-f(\bm{x}^*)\right),
\end{aligned}
\end{equation*}
 from which one can derive that $
\lim\limits_{k\rightarrow\infty}f(\bm{x}(t_k)){=}f(\bm{x}^*).
$
Then, taking similar steps as  in the proof of Theorem 1, one has
$f(\bm{x}(T_c)){=}f(\bm{x}^*)$.	Recalling (\ref{eq.x.compact.undirected}), one can conclude that  Problem \ref{problem} is solved.
\end{proof}

\begin{corollary}\label{coro.undirected}
	Suppose that Assumptions \ref{AssumptionLip} holds, and graph $\mathcal{G}$ is undirected and connected. Then, the distributed algorithm \dref{SamplingTimeSec2}, \dref{SamplingTimeIntervalPractical} and \dref{eq.law.x.i.2} approximately solves Problem \ref{problem} with the convergence accuracy  \begin{equation*}
	\begin{aligned}
	&f(\bm{x}(T_c))-f(\bm{x}^*)\\
	\leq&\left(1-\frac{\beta l_0\lambda_2(\mathcal{L}^2)}{4}\right)^{k_\epsilon}\left(f(\bm{x}(0))-f(\bm{x}^*)\right).
	\end{aligned}
	\end{equation*}
\end{corollary}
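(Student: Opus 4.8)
The plan is to specialize the argument behind Corollary~\ref{TheoremPartialContain} to the reduced-order setting, starting from the one-step estimate established inside the proof of Theorem~2. The key point to isolate first is that the contraction inequality obtained there,
\begin{equation*}
f(\bm{x}(t_{k+1})){-}f(\bm{x}^*)\leq\Big(1{-}\tfrac{\beta l_0\lambda_2^2(\mathcal{L})}{4}\Big)\big(f(\bm{x}(t_k)){-}f(\bm{x}^*)\big),
\end{equation*}
does not depend on the particular schedule: integrating the second equation of \dref{eq.law.x.i.2} over $[t_k,t_{k+1})$ cancels the factor $1/(t_{k+1}-t_k)$ against the interval length, so $\bm{\xi}(t_{k+1})-\bm{\xi}(t_k)=\beta\mathcal{L}\nabla f(\bm{x}(t_k))$ for \emph{any} $T_k$, and hence the bound remains valid verbatim after \dref{SamplingTimeIntervalSec2} is replaced by \dref{SamplingTimeIntervalPractical}. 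Writing $\delta:=\beta l_0\lambda_2^2(\mathcal{L})/4$ and using $\lambda_2(\mathcal{L}^2)=\lambda_2^2(\mathcal{L})$ for the symmetric positive semidefinite Laplacian, I would check $0<1-\delta<1$ from $\beta\leq 1/(l\|\mathcal{L}\|^2)$, $l_0\leq l$ and $\lambda_2(\mathcal{L})\leq\|\mathcal{L}\|$ (these give $\delta\leq 1/4$), and then iterate to get $f(\bm{x}(t_k))-f(\bm{x}^*)\leq(1-\delta)^k\big(f(\bm{x}(0))-f(\bm{x}^*)\big)$ for all $k\in\mathbb{N}$; each term on the left is nonnegative because, by \dref{eq.x.compact.undirected} with $\mathbf{1}^T\mathcal{L}=0$, every $\bm{x}(t_k)$ is feasible for the problem defining $\bm{x}^*$.

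Next I would locate the state at the specified time. Under \dref{SamplingTimeIntervalPractical} the first $k_\epsilon$ intervals give $t_{k_\epsilon}=\sum_{l=1}^{k_\epsilon}\frac{6}{(\pi l)^2}T_c<\sum_{l=1}^{\infty}\frac{6}{(\pi l)^2}T_c=T_c$ (since $\sum_{l\geq1}l^{-2}=\pi^2/6$), while every later interval contributes $\epsilon>0$, so $t_k\to\infty$ and there is a largest index $k^\star$ with $t_{k^\star}\leq T_c$, necessarily $k^\star\geq k_\epsilon$. Because \dref{eq.law.x.i.2} keeps $\bm{x}(\cdot)$ constant on each $[t_k,t_{k+1})$, one has $\bm{x}(T_c)=\bm{x}(t_{k^\star})$, hence
\begin{equation*}
f(\bm{x}(T_c))-f(\bm{x}^*)\leq(1-\delta)^{k^\star}\big(f(\bm{x}(0))-f(\bm{x}^*)\big)\leq(1-\delta)^{k_\epsilon}\big(f(\bm{x}(0))-f(\bm{x}^*)\big),
\end{equation*}
the last inequality using $0<1-\delta<1$, $k^\star\geq k_\epsilon$ and $f(\bm{x}(0))\geq f(\bm{x}^*)$. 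Feasibility of $\bm{x}(T_c)$ (again from \dref{eq.x.compact.undirected}) closes the equality-constraint part of Problem~\ref{problem}, so the algorithm approximately solves it with the claimed accuracy.

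I do not expect a genuine obstacle; the proof is essentially bookkeeping built on Theorem~2. The one point that warrants care — and the reason the argument goes through at all — is the decoupling noted above: truncating the near-geometric schedule to a periodic tail changes only \emph{how many} updates have occurred by time $T_c$ (at least $k_\epsilon$), not the factor $1-\delta$ by which the optimality gap shrinks per update, so the modified schedule trades exact settling at $T_c$ for the residual error $(1-\delta)^{k_\epsilon}$ without disturbing the contraction estimate.
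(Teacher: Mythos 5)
Your proposal is correct and follows essentially the route the paper intends: the corollary is stated without proof precisely because it is the per-step contraction $f(\bm{x}(t_{k+1}))-f(\bm{x}^*)\leq\bigl(1-\tfrac{\beta l_0\lambda_2^2(\mathcal{L})}{4}\bigr)\bigl(f(\bm{x}(t_k))-f(\bm{x}^*)\bigr)$ from Theorem~2 (which, as you note, is schedule-independent and unaffected by replacing \dref{SamplingTimeIntervalSec2} with \dref{SamplingTimeIntervalPractical}) iterated over the at least $k_\epsilon$ sampling instants that occur before $T_c$, together with $\lambda_2(\mathcal{L}^2)=\lambda_2^2(\mathcal{L})$ and feasibility from $\mathbf{1}^T\mathcal{L}=0$. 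Your additional checks ($0<1-\delta<1$, the count $k^\star\geq k_\epsilon$, nonnegativity of the optimality gap) are exactly the bookkeeping the paper leaves implicit.
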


\begin{figure}[h]
	\centering
	\includegraphics[width=1.2in]{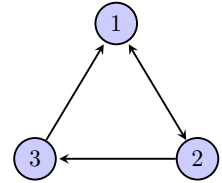}
	\caption{The communication topology of the generators.}
	\label{fig.topology.di}
\end{figure}

\section{Numerical simulations}\label{SectionFive}
In this section, a numerical example is provided to verify our established theoretical results.
Consider a power system consisting of three generators with the communication topology shown in Fig. \ref{fig.topology.di}, which is a general unbalanced directed graph. Denote the active power generated by the three generators as $x_1,x_2,x_3$, respectively.
From \cite{ChenLiAuto2018ConDis}, the cost function of each  generator
can be generally approximated by: $f_1(x_1)=0.096 x_1^2+ 1.22 x_1 +51,$ $f_2(x_2)= 0.072 x_2^2+3.41x_2 +31$, and $f_3(x_3)=0.105 x_3^2+2.53x_3 + 78$.
The  total load demand is
$420$MW, and the initial values $x_1(0){=}x_2(0){=}x_3(0){=}140$MW.
The economic dispatch problem of the power system is to determine the active power supplied by each generator to  minimize the total cost while meeting the total load demand, which can be described by 
\begin{equation}\label{simu.problem}
\begin{aligned}
&\mathrm{Minimize}\;\sum_{i=1}^{3}f_{i}(x_{i}),\\
& \mathrm{subject}~~\mathrm{to}~ \sum_{i=1}^{3}x_{i}(t)=420\text{MW}.
\end{aligned}
\end{equation}

\begin{figure}[h]
	\centering
	\includegraphics[width=3.5in]{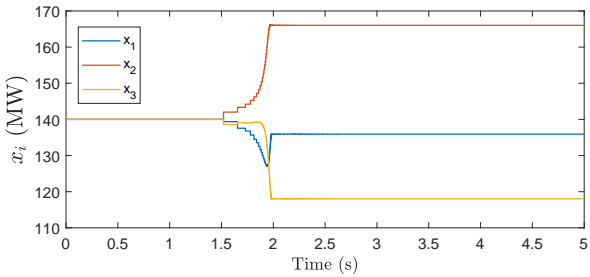}
	\caption{Evolution of $x_1,x_2,x_3$ during 0s--5s.}
	\label{fig.x}
\end{figure}

\begin{figure}[h]
	\centering
	\includegraphics[width=3.5in]{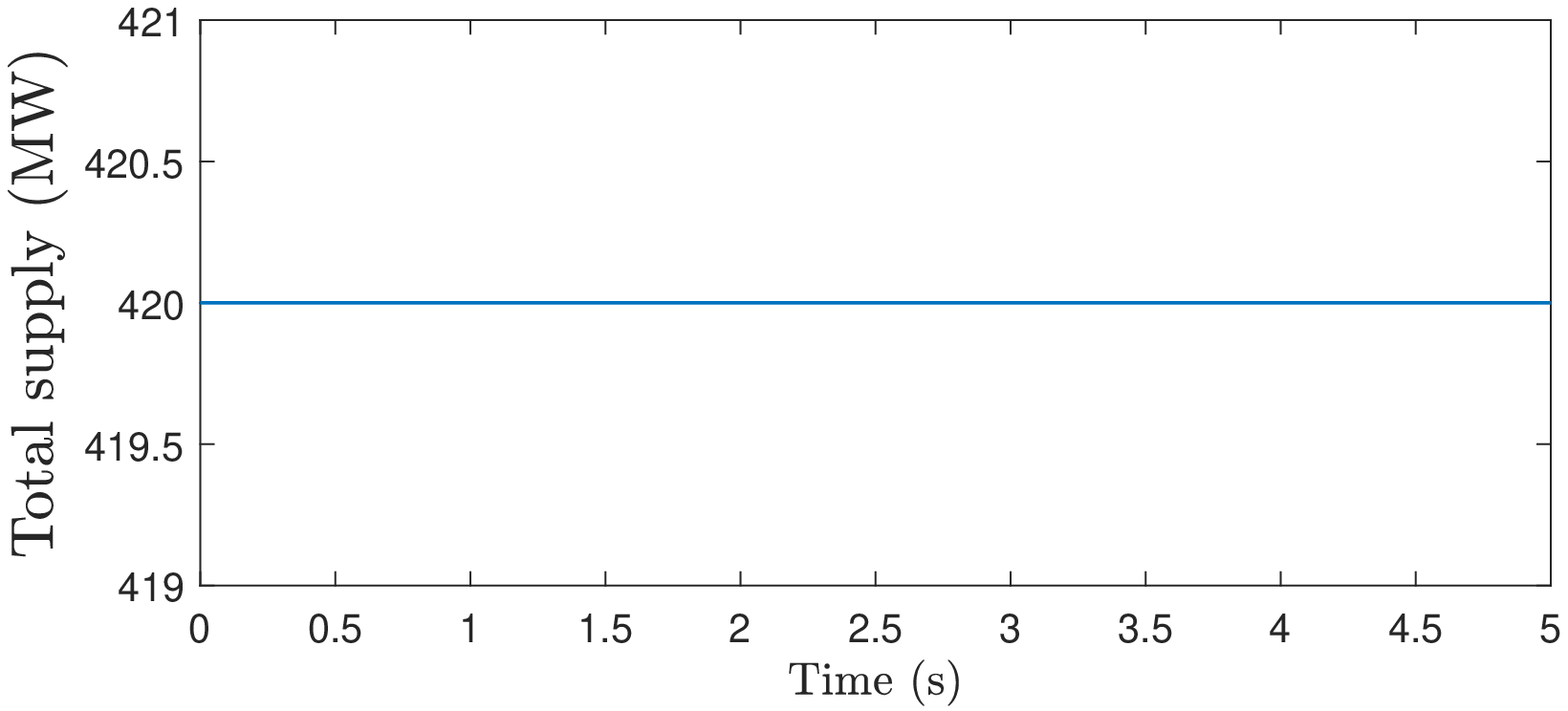}
	\caption{Evolution of the total supply $\sum_{i=1}^3x_i$ during 0s--5s.}
	\label{fig.sum_x}
\end{figure}

\begin{figure}[h]
	\centering
	\includegraphics[width=3.5in]{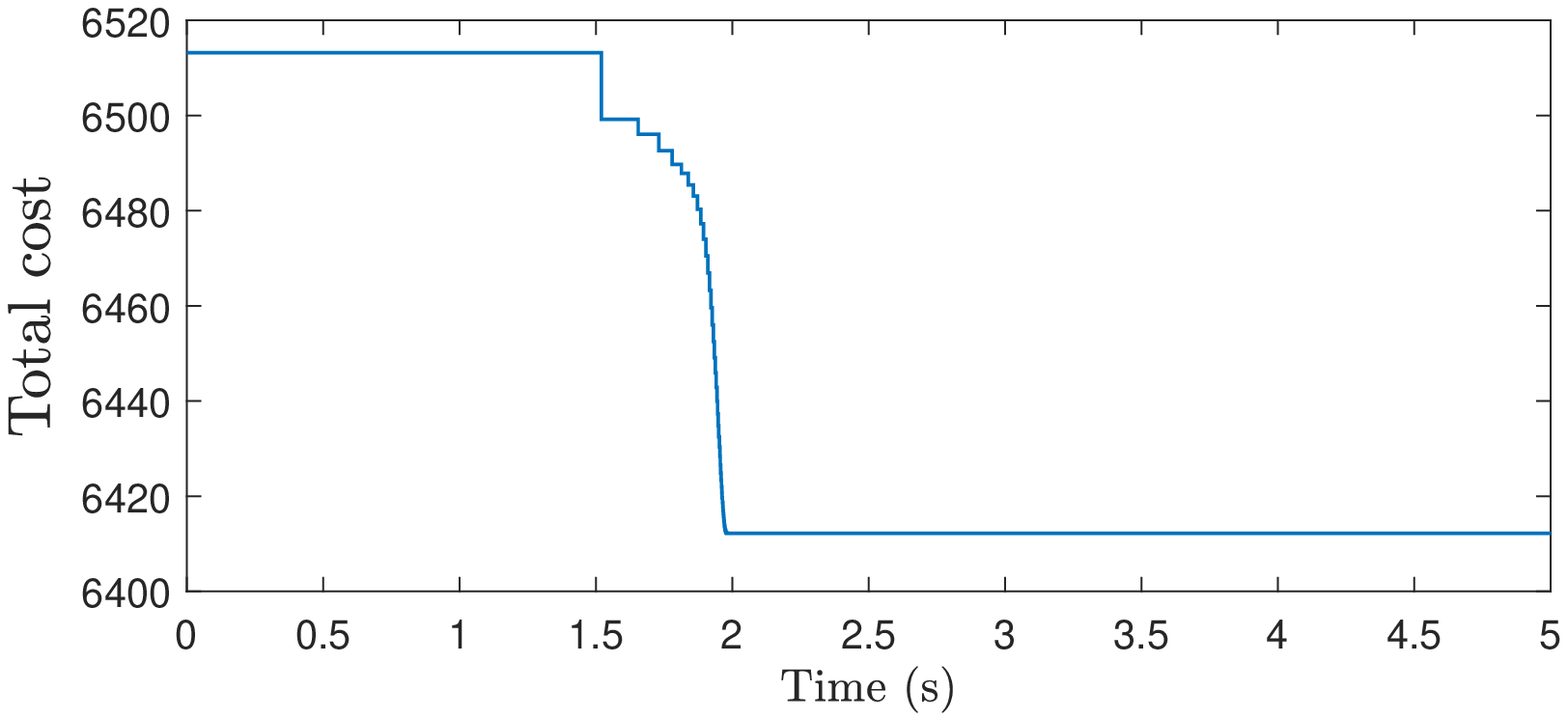}
	\caption{Evolution of the total cost $\sum_{i=1}^3f_i$ during 0s--5s.}
	\label{fig.totalcost}
\end{figure}

\begin{figure}[h]
	\centering
	\includegraphics[width=3.5in]{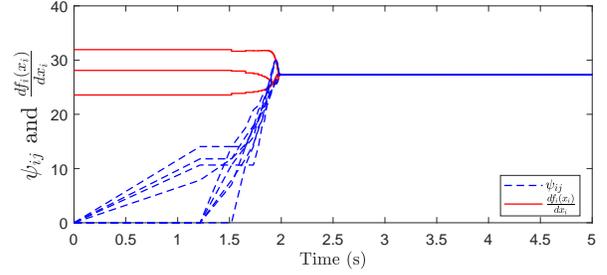}
	\caption{Evolution of $\psi_{ij}$ and $\frac{df_i(x_i)}{dx_i}$ $(i,j{=}1,2,3)$ during 0s--5s.}
	\label{fig.psi}
\end{figure}

To find the solution to problem  \dref{simu.problem} at the pre-specified time $T_c=2$s, we use the proposed algorithm \dref{eq.law.x.i}, which allows the agents exchange information only at some discrete-time instants. Arranged the sampling time sequence as \dref{SamplingTimeSec2} and \dref{SamplingTimeIntervalPractical} with $k_\epsilon=80$, and $\epsilon=0.01$. The results are shown in Figs.\ref{fig.x}-\ref{fig.psi}. The values of $x_1,x_2,x_3$ and the total cost at $5s$ are
$135.9293$, $166.0307$, $118.0401$ and $6412.187283$ respectively, which are  the same as the optimal solution found in \cite{ChenLiAuto2018ConDis}. Figs. \ref{fig.x} and \ref{fig.totalcost} show that $x_1,x_2,x_3$  and the total cost almost converge to the optimal values at the pre-specified time $2$s respectively.
To be more specific, the value of the total cost at $2$s is $6412.187397$, showing the high accuracy of the proposed specified-time distributed optimization algorithm with the modified Zeno-free sampling time sequence. Fig. \ref{fig.sum_x} illustrates that the equality constraint in problem \dref{simu.problem} is satisfied during the whole course. One can observe from Fig. \ref{fig.psi} that $\psi_{ij},\frac{df_i}{dx_i}(x_i),i,j{=}1,2,3$ reach consensus at $2$s, which is in accordance with the theoretical analysis that the components of the gradient of the global objective function are equal to each other at the optimal point and the gradient estimation by each agent will converge to the actual gradient at the specified time.

\section{Conclusion}\label{SectionSix}
In this paper, specified-time distributed optimization problem subject to the global equality constraint is investigated. A new distributed optimization algorithm is first proposed for general directed MANs, which can exactly reach the optimal solution at specified time under sampled-data communications. Under this algorithm, the settling time does not rely on algorithm parameters or communication topologies, which can be pre-assigned independently. For the special case of undirected MANs, a reduced-order distributed optimization algorithm is also designed, such that the computational cost would be greatly saved. Future work can be done to design specified-time distributed optimization algorithms for MANs with 
more practical constraints on decision variables.



\end{document}